\newtheorem{lemma}{Lemma}[section]
\newtheorem{proposition}[lemma]{Proposition}
\newtheorem{theorem}[lemma]{Theorem}
\theoremstyle{definition}
\numberwithin{equation}{section}
\newcommand{\R}{\mathbb{R}}
\newcommand{\esssup}{\operatornamewithlimits{ess\,sup}}
\newcommand{\essinf}{\operatornamewithlimits{ess\,inf}}
\renewcommand{\Gamma}{\varGamma}
\newcommand{\supt}{\operatorname{supt}}
\newcommand{\Sf}{{\mathbb S^{n-1}}}
\def\XXint#1#2#3{{\setbox0=\hbox{$#1{#2#3}{\int}$} 
		\vcenter{\hbox{$#2#3$}}\kern-.5\wd0}}
\title[Approximations of anisotropic Sobolev norms]{Nonlocal approximations to anisotropic Sobolev norms}
\author[I.\ Cinelli]{Ivan Cinelli}
\author[G.\ Ferrari]{Gianluca Ferrari}
\author[M.\ Squassina]{Marco Squassina}
\address[I.\ Cinelli]{Dipartimento di Matematica e Fisica
	\newline\indent
	Universit\`a Cattolica del Sacro Cuore
	\newline\indent
	Via della Garzetta 48, Brescia, Italy}
\email{ivan.cinelli01@icatt.it}
\address[G.\ Ferrari]{Dipartimento di Matematica e Fisica
	\newline\indent
	Universit\`a Cattolica del Sacro Cuore
	\newline\indent
	Via della Garzetta 48, Brescia, Italy}
\email{gianluca.ferrari03@icatt.it}
\address[M.\ Squassina]{Dipartimento di Matematica e Fisica
	\newline\indent
	Universit\`a Cattolica del Sacro Cuore
	\newline\indent
	Via della Garzetta 48, Brescia, Italy}
\email{marco.squassina@unicatt.it}
\keywords{Anisotropic Sobolev spaces,  Nonlocal characterizations, Image processing}
\subjclass[2010]{35J92, 35P30, 34L16}
\thanks{The third author is member
	of {\em Gruppo Nazionale per l'Analisi Ma\-te\-ma\-ti\-ca, la Probabilit\`a e le loro Applicazioni} (GNAMPA) 
	of the {\em Istituto Nazionale di Alta Matematica} (INdAM)}
\begin{document}

\begin{abstract}
We obtain some nonlocal characterizations for a class of variable exponent Sobolev spaces 
arising in nonlinear elasticity, in the theory of electrorheological fluids as well as in image 
processing for the regions where the variable exponent $p(x)$ reaches the value $1$. 
\end{abstract}

\maketitle

\section{Introduction}

Let $\Omega$ be a smooth bounded domain, $p\geq 1$ and $\varphi : [0,+\infty) \to [0,+\infty)$ be a continuous function, except at a finite number of points, such that $\varphi(0) = 0$ and 
\begin{equation*}
	\gamma_{n,p}\int_0^{+\infty} \varphi(t)\,t^{-(p+1)}\,dt=1,
\end{equation*}
where 
$$
\gamma_{n,p}=\int_{\Sf}|\omega\cdot\mathbf{e}|^{p}\,d\mathcal{H}^{n-1}(\omega) , \quad \mathbf{e}\in\Sf.
$$
If $\varphi$ satisfies further growth conditions, Brezis and Nguyen proved recently in \cite{BNg20} that for 
every function $u\in W^{1,p}(\Omega)$ with $p>1$
$$
\lim_{\delta\to0^+} \delta^{p}\int_\Omega \int_\Omega \frac{\varphi \left(|u(x)-u(y)|/\delta\right)}{|x-y|^{n+p}}\,dx\,dy =\int_{\Omega}|\nabla u(x)|^{p}\,dx.
$$
Hence the $p$-Dirichlet energy functional 
$$
W^{1,p}(\Omega)\ni u\mapsto \int_{\Omega}|\nabla u(x)|^{p}\,dx
$$
can be approximated by a suitable class of  nonlocal nonconvex energy functionals. 

A similar pointwise convergence result was investigated in \cite{BNg18} for smooth functions  in the 
case $p=1$, relevant for image processing. In this case, the above nonlocal nonconvex functionals are related to image processing theories. For nonsmooth
functions the situation is much more involved and the convergence only holds in the sense of $\Gamma$-convergence
\cite{BNg18}. As a model function for the above results one can think of
\begin{equation}
	\label{modellovarphi}
\varphi(t) = a
\begin{cases}
	t^{p+1}, & \ \mathrm{for} \ 0 \le t \le 1, \\ 
	1, & \ \mathrm{for} \ t>1,
\end{cases}
\qquad a>0.
\end{equation}
Other types of nonlocal nonconvex approximations of  $p$-Dirichlet energies where previously investigated by Nguyen in \cite{nguyen06,NgSob2,Ng11} after more classical convex approximations like the one by Bourgain, Brezis and Mironescu where studied, see \cite{BBNg1,BBM1}. 

On the other hand, differential equations and variational problems involving
variable $p(x)$-growth conditions, and hence variable exponent Sobolev spaces $W^{1,p(\cdot)}$, arise
from nonlinear elasticity theory and electrorheological fluids, and have been the target of various
investigations, especially in regularity theory, see \cite{DHHR,AcMi1,AcMi2}.
A model investigated by Chen, Levine e Rao \cite{rao} was elaborated with the idea 
of  merging parts of the domain where $p(x)=2$ (isotropic diffusion) is a suitable 
choice and parts where $p(x)=1$ (total variation case) is instead more suitable.
They investigated the minimization problem
$$ 
\min \int_{\Omega} | Du|^{p(x)} + \frac{\lambda}{2} \int_{\Omega}|u-f|^{2}dx
$$
with $1 \le p(x) \le 2$ and $\lambda \ge 0$.
It is thus natural to wonder if also the $p(\cdot)$-Dirichlet energy  
$$
W^{1,p(x)}(\Omega)\ni u\mapsto \int_{\Omega}|\nabla u(x)|^{p(x)}\,dx
$$
can be approximated by a suitable class of  nonlocal nonconvex energy functionals.
We will prove that, in fact, under suitable assumptions on the function $\varphi(x,t)$
$$
 \lim_{\delta\to0^+}\int_\Omega \int_\Omega \delta^{p(x)}\frac{\varphi \left(x,|u(x)-u(y)|/\delta\right)}{|x-y|^{n+p(x)}}\,dx\,dy=\int_{\Omega}|\nabla u(x)|^{p(x)}\,dx,
$$
for every $u\in W^{1,p^{+}}\left(\Omega\right)\cap W^{1,p^{-}}\left(\Omega\right)$ in the case $\inf p>1$ and for $u\in C^1(\overline{\Omega})$ in the case $\inf p=1$.
As an example of function $\varphi$ consistent with the meaningful constant case \eqref{modellovarphi},  
one can consider 
$$
\varphi(x,t) = 
\begin{cases}
	a(x)t^{p(x)+1}, & \ \mathrm{for} \ 0 \le t \le 1, \\ 
	b(x), & \ \mathrm{for} \ t>1,
\end{cases}
$$
where $a(x)$ is a measurable function and
$$
b(x) =  p(x)\left(\frac{1}{\gamma_{n,p(x)}} - a(x)\right),\qquad 
a(x) \le \frac{p(x)}{\gamma_{n,p(x)}\left(p(x)+1\right)} \le b(x),
$$
for a.e. $x\in\Omega$,
in order to fulfill the normalization  
\begin{equation}
	\label{Hp4}
\gamma_{n,p(x)}\int_0^{+\infty} \varphi(x,t)\,t^{-(p(x)+1)}\,dt=1, 
\end{equation}
where we have set,  for a.e. $x\in\R^n$,
$$
\gamma_{n,p(x)}:=\int_{\Sf}|\omega\cdot\mathbf{e}|^{p(x)}\,d\mathcal{H}^{n-1}(\omega), \quad \mathbf{e}\in\Sf.
$$ 
Previously, in the recent work \cite{FS-2020}, two of the authors proved a different approximation result 
in the variable exponent case. More precisely, for
$u\in W^{1,p^{+}}\left(\R^n\right)\cap W^{1,p^{-}}\left(\R^n\right)$, then
		$$
		\lim_{\delta\to0} \underset{|u(x)-u(y)|>\delta}{\int_{\R^n}\int_{\R^n}} \frac{\delta^{p(x)}}{|x-y|^{n+p(x)}} \, dx \, dy = \int_{\R^n} K_{n,p(x)} |\nabla u(x)|^{p(x)} \, dx,
		$$
		where we have set
		\begin{equation*}
			K_{n,p(x)} := \frac{1}{p(x)} \int_{\mathbb{S}^{n-1}} \left|\omega\cdot
			\mathbf{e}\right|^{p(x)}\,d\mathcal{H}^{n-1}(\omega) ,\quad \mathbf{e}\in\mathbb{S}^{n-1}.
		\end{equation*}
		This extends the results in \cite{nguyen06}.
		
		 The limitations to
		$u\in W^{1,p^{+}}\left(\Omega\right)\cap W^{1,p^{-}}\left(\Omega\right)$
		in place of $u\in W^{1,p(\cdot)}\left(\Omega\right)$ is related to failure of  the
		continuity inequality for the maximal 
		function when $p(x)$ is not constant. More precisely, taken any $\omega \in \Sf$, we define
		$
		\mathcal{M}_\omega(u)(x) = \sup_{h>0} \frac{1}{h} \int_0^h |u(x+s\omega)|\,ds \,
		$
		as the maximal function of $u$ along the direction $\omega$. By \cite[Lemma 3.1]{NPSV}
		there exists $C>0$ such that, for all $\omega \in \Sf$,
		$$
		\int_{\R^n} |\mathcal{M}_\omega(u)(x)|^p \,dx \le C \int_{\R^n} |u(x)|^p \,dx , \qquad \forall u \in L^p\left(\R^n\right).
		$$ 
		%
		For variable exponents the inequality fails in general \cite{larsp,Izuki}. For instance, if
		$p(x)=2$ on $(-\infty,-2)$ and $p(x)\geq 4$ on $[2,+\infty)$, then $\int_\R |u|^{p(x)}dx<+\infty$, 
		but  $\int_\R |\mathcal{M}(u)|^{p(x)}dx=+\infty$ for the function $u(x)=|x|^{-1/3}\chi_{[2,\infty)}(x)$.

\section{Main results}
 Let us now formulate the main results.
Consider $p:\Omega\to[1,+\infty)$ measurable and set
$$
p^- := \essinf_{\Omega} p  \qquad \text{and} \qquad p^+ := \esssup_{\Omega} p .
$$
We set
$$
W^{1,p^{\pm}}\left(\Omega\right):=W^{1,p^{+}}\left(\Omega\right)\cap W^{1,p^{-}}\left(\Omega\right) ,
$$
Let $\varphi : \R^n\times [0,+\infty) \to [0,+\infty)$ be a continuous function -- except at a finite number of points in $(0,+\infty)$ -- such that $\varphi(x,0) = 0$ for a.e. $x\in\R^n$ and satisfying the following assumptions
\begin{equation}
	\label{Hp1}
	\exists a \ge 0 : \quad \varphi(x,t) \le at^{p(x)+1} ,\quad \forall t \in [0,1],
\end{equation}
\begin{equation}
	\label{Hp2}
	\exists b \ge 0 : \quad \varphi(x,t) \le b ,\quad \forall t \in \R^+ ,
\end{equation}
for a.e. $x\in\R^n$.
In the spirit of \cite{BNg20}, we introduce the following nonlocal functionals
$$
\Lambda_{\delta}(u) := \int_\Omega \int_\Omega \frac{\varphi_{\delta} \left(x,|u(x)-u(y)|\right)}{|x-y|^{n+p(x)}}\,dx\,dy,
$$
where 
$$
\varphi_{\delta}(x,u):=\delta^{p(x)} \varphi(x,u/\delta),\qquad \delta>0.
$$
As anticipated, we will prove that, in the above framework, if $1\leq p^-\le p^+<+\infty$, then 
$$
\lim_{\delta\to0^+} \Lambda_{\delta}(u) =\int_{\Omega}|\nabla u(x)|^{p(x)}\,dx
$$
for $u\in W^{1,p^{\pm}}\left(\Omega\right)$ when $p^->1$
and for $u\in C^1\left(\bar{\Omega}\right)$ when $p^-=1$. As already pointed out this
is not a technical limitation, but a true difference between $p^->1$ and $p^-=1$.
Further sufficient conditions for a function to belong to $W^{1,p(\cdot)}(\Omega)$
can be found in Proposition \ref{suffcond1} and Theorem \ref{prop11}.
The integrals of \cite{nguyen06,NgSob2,Ng11}  corresponds  to $\varphi(x,t)=\chi_{(1,+\infty)}(t)$,
up to small changes in the formulation of the results.

\subsection{The case $p^- > 1$}
We have the following main result
\begin{theorem}
	Let $1<p^-\le p^+<+\infty$ and $u\in W^{1,p^{\pm}}\left(\Omega\right)$. Assume \eqref{Hp4}, \eqref{Hp1}, \eqref{Hp2}. Then
	\begin{enumerate}
		\item[(a)] there exists $C>0$, depending on $p^\pm$ and the domain $\Omega$, such that for every $\delta>0$
		$$
			\Lambda_{\delta}(u) \le C\left(\|\nabla u\|_{L^{p^+}\left(\Omega\right)}^{p^+} + \|\nabla u\|_{L^{p^-}\left(\Omega\right)}^{p^-}\right);
		$$
		\item[(b)] we have
		$$
			\lim_{\delta\to0^+} \Lambda_{\delta}(u) =\int_{\Omega}|\nabla u(x)|^{p(x)}\,dx.
		$$
	\end{enumerate}
\end{theorem}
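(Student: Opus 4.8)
\emph{Overall strategy.} I would follow the scheme of Brezis--Nguyen \cite{BNg20}, replacing the exponent $p(\cdot)$ by the constants $p^{\pm}$ at every point where a \emph{function-space} inequality (rather than a pointwise one) is invoked; this is exactly where the hypothesis $u\in W^{1,p^{\pm}}(\Om)$ becomes unavoidable, since the directional maximal inequality recalled in the introduction fails for genuinely variable exponents. The recurring pointwise ingredient, extracted from \eqref{Hp1}--\eqref{Hp2} by distinguishing $t\le\delta$ (scale \eqref{Hp1}) from $t>\delta$ (use \eqref{Hp2}), is
$$
\varphi_{\delta}(x,t)\le \max\{a,b\}\,\min\Bigl\{\tfrac{t^{p(x)+1}}{\delta},\ \delta^{p(x)}\Bigr\}\qquad\text{for all }t\ge0,\ \text{a.e. }x;
$$
it already encodes the saturation responsible for the boundedness of $\Lambda_\delta$.

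\emph{Part (a).} After extending $u$ to $W^{1,p^{+}}(\Rn)\cap W^{1,p^{-}}(\Rn)$ by a bounded extension operator (licit since $\Om$ is smooth and bounded), one has $|u(x)-u(x+r\omega)|\le r\,\mathcal M_\omega(|\grd u|)(x)$ for $x\in\Om$, $\omega\in\Sf$ and $0<r<\diam\Om$. Passing to polar coordinates in $y=x+r\omega$ and using the displayed bound, the radial integral $\int_0^{\diam\Om}\min\{(r\,\mathcal M)^{p(x)+1}/\delta,\ \delta^{p(x)}\}\,r^{-1-p(x)}\,dr$ is estimated by an explicit one-variable computation (the two competing expressions cross at $r=\delta/\mathcal M$) by $C\,\mathcal M_\omega(|\grd u|)(x)^{p(x)}$, with $C=C(p^{\pm},a,b)$. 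Thus $\Lambda_\delta(u)\le C\int_\Om\int_{\Sf}\mathcal M_\omega(|\grd u|)(x)^{p(x)}\,d\mathcal H^{n-1}(\omega)\,dx$. Finally $t^{p(x)}\le t^{p^-}+t^{p^+}$ and \cite[Lemma 3.1]{NPSV} applied with the \emph{constant} exponents $p^-$ and $p^+$ give (a); note that \eqref{Hp4} is not used here.

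\emph{Part (b): the smooth case and the lower bound.} For $v\in C^1(\overline\Om)$, extended to $C^1_c(\Rn)$, the substitution $y=x+\delta w$ rewrites $\Lambda_\delta(v)$ as $\int_\Om\int_{\Rn}\mathbf 1_{\{x+\delta w\in\Om\}}\,\varphi\bigl(x,|v(x+\delta w)-v(x)|/\delta\bigr)\,|w|^{-n-p(x)}\,dw\,dx$; the integrand converges a.e. to $\varphi(x,|\grd v(x)\cdot w|)\,|w|^{-n-p(x)}$ (using $\varphi(x,0)=0$ and a.e.\ continuity of $\varphi(x,\cdot)$) and is dominated uniformly in small $\delta$ by an $L^1(\Om\times\Rn)$ function — a multiple of $|w|^{1-n}$ on $\{|w|\le\|\grd v\|_\infty^{-1}\}$ via \eqref{Hp1}, a multiple of $|w|^{-n-p^-}$ outside via \eqref{Hp2}. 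Dominated convergence, polar coordinates $w=s\omega$, the substitution $\sigma=s\,|\grd v(x)\cdot\omega|$, the rotational identity $\int_{\Sf}|\xi\cdot\omega|^{p(x)}\,d\mathcal H^{n-1}(\omega)=|\xi|^{p(x)}\gamma_{n,p(x)}$ and \eqref{Hp4} then yield $\Lambda_\delta(v)\to\int_\Om|\grd v(x)|^{p(x)}\,dx$. For general $u\in W^{1,p^{\pm}}(\Om)$ the lower bound is soft: at a.e.\ $x$ the function $u$ is approximately differentiable, so $(u(x+\delta w)-u(x))/\delta\to\grd u(x)\cdot w$ in measure in $w$, hence (boundedness and a.e.\ continuity of $\varphi(x,\cdot)$) the integrands converge in measure, and Fatou's lemma — along a.e.-convergent subsequences, first in $w$, then in $x$ — together with the computation just made gives $\liminf_{\delta\to0}\Lambda_\delta(u)\ge\int_\Om|\grd u(x)|^{p(x)}\,dx$; no monotonicity of $\varphi$ is needed for this direction.

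\emph{Part (b): the upper bound and the main obstacle.} Choose $u_k\in C^1(\overline\Om)$ with $u_k\to u$ in $W^{1,p^{+}}(\Om)$; since $\Om$ is bounded, $W^{1,p^{+}}(\Om)\hookrightarrow W^{1,p^{-}}(\Om)$, so $u_k\to u$ in $W^{1,p^{\pm}}(\Om)$. Assuming, as for the model functions, that $\varphi(x,\cdot)$ is non-decreasing (otherwise one first passes to its non-decreasing envelope, which still satisfies \eqref{Hp1}--\eqref{Hp2}), the elementary inequality $\varphi(x,A+B)\le\varphi(x,(1+\eps)A)+\varphi(x,(1+\tfrac1\eps)B)$ gives, for every $\eps>0$,
$$
\Lambda_\delta(u)\le \Lambda_\delta\bigl((1+\eps)u_k\bigr)+\Lambda_\delta\bigl((1+\tfrac1\eps)(u-u_k)\bigr).
$$
Taking $\limsup_{\delta\to0}$: the first term tends to $\int_\Om(1+\eps)^{p(x)}|\grd u_k|^{p(x)}\le(1+\eps)^{p^+}\int_\Om|\grd u_k|^{p(x)}$ by the smooth case, and the second is bounded, \emph{for every $\delta$}, by $C(1+\tfrac1\eps)^{p^+}\bigl(\|\grd(u-u_k)\|_{L^{p^+}(\Om)}^{p^+}+\|\grd(u-u_k)\|_{L^{p^-}(\Om)}^{p^-}\bigr)$ thanks to part (a). Letting $k\to\infty$ (the remainder vanishes and $\int_\Om|\grd u_k|^{p(x)}\to\int_\Om|\grd u|^{p(x)}$ by continuity of the modular on $L^{p^+}\cap L^{p^-}$), then $\eps\to0$, yields $\limsup_{\delta\to0}\Lambda_\delta(u)\le\int_\Om|\grd u(x)|^{p(x)}\,dx$, which with the lower bound proves (b). The crux — and what I would be most careful about — is precisely this density step: part (a) must be available \emph{before} $\eps$ is sent to zero (the $(1+1/\eps)$-remainder blows up otherwise), only the harmless factor $(1+\eps)^{p^+}\to1$ is lost, and one has to check that the envelope/monotonicity reduction is compatible with the normalization \eqref{Hp4}; the remainder of the argument is routine variable-exponent bookkeeping around the constant-exponent estimates.
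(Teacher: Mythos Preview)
Your treatment of part (a) is essentially the paper's argument: polar coordinates plus the pointwise bound $|u(x+r\omega)-u(x)|\le r\,\mathcal M_\omega(|\nabla u|)(x)$, the radial integral collapsing to $C\,\mathcal M_\omega(|\nabla u|)(x)^{p(x)}$, then the split $t^{p(x)}\le t^{p^-}+t^{p^+}$ and the constant-exponent directional maximal inequality. No issue there.

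The gap is in your upper bound for part (b). The splitting inequality
\[
\varphi(x,A+B)\le \varphi\bigl(x,(1+\eps)A\bigr)+\varphi\bigl(x,(1+\tfrac1\eps)B\bigr)
\]
genuinely requires $\varphi(x,\cdot)$ to be non-decreasing, and hypothesis \eqref{Hp3} is \emph{not} among the assumptions of this theorem (it is only imposed later, for Proposition~\ref{suffcond1} and Theorem~\ref{prop11}). Your proposed workaround --- replacing $\varphi$ by its non-decreasing envelope $\bar\varphi$ --- does yield $\Lambda_\delta(u;\varphi)\le\Lambda_\delta(u;\bar\varphi)$ and $\bar\varphi$ still satisfies \eqref{Hp1}--\eqref{Hp2}, but it \emph{destroys} the normalization \eqref{Hp4}: in general $\gamma_{n,p(x)}\int_0^\infty \bar\varphi(x,t)\,t^{-p(x)-1}\,dt=:c(x)\ge1$ with strict inequality possible. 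Your smooth-case computation then gives $\lim_\delta\Lambda_\delta(v;\bar\varphi)=\int_\Om c(x)|\nabla v(x)|^{p(x)}\,dx$, and after the density step you only conclude $\limsup_\delta\Lambda_\delta(u)\le\int_\Om c(x)|\nabla u(x)|^{p(x)}\,dx$, which is too weak. You flagged this compatibility as something to check; in fact it fails, and the density route does not close as written.

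The paper avoids the issue entirely by observing that the work you did in (a) already produces a $\delta$-independent \emph{integrable majorant}, not just a numerical bound: after the change of variables $h\mapsto\delta h$ one has
\[
\frac{\varphi\bigl(x,|u(x+\delta h\omega)-u(x)|/\delta\bigr)}{h^{p(x)+1}}
\le \frac{\tilde\varphi\bigl(x,h\,\mathcal M_\omega(\nabla u)(x)\bigr)}{h^{p(x)+1}},
\]
and the right-hand side lies in $L^1\bigl(\R^n\times(0,\infty)\times\Sf\bigr)$ precisely by the estimate in (a). Since the left-hand side converges a.e.\ to $\varphi(x,|\nabla u(x)\cdot\omega|h)/h^{p(x)+1}$ (using a.e.\ differentiability of $u$ along lines and the a.e.\ continuity of $\varphi(x,\cdot)$), dominated convergence gives the limit \emph{directly} for every $u\in W^{1,p^\pm}(\R^n)$, with no approximation and no monotonicity needed. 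The passage to bounded $\Om$ is then handled by Fatou on compact $D\Subset\Om$ for the $\liminf$ and by extension to $\R^n$ for the $\limsup$. In short: your majorant from (a) is already the dominating function for (b); the density detour is unnecessary and is what introduces the unprovable hypothesis.
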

\begin{proof}
	First of all, we prove the results in the case $\Omega = \R^n$.
\item[(a)] By making the change of variables $z=x-y$ and using polar coordinates for $z$, we can write $\Lambda_{\delta}(u)$ as
$$
\begin{aligned}
\Lambda_{\delta}(u) & = \int_{\R^n}\int_{\R^n} \delta^{p(x)} \frac{\varphi\left(x,|u(x)-u(y)|/\delta\right)}{|x-y|^{n+p(x)}}\,dx\,dy \\
& = \int_{\R^n}\int_{\Sf}\int_0^{+\infty} \delta^{p(x)} \frac{\varphi\left(x,|u(x+h\omega)-u(x)|/\delta\right)}{|h\omega|^{n+p(x)}}h^{n-1}\,dh\,d\mathcal{H}^{n-1}(\omega)\,dx \\
& = \int_{\R^n}\int_{\Sf}\int_0^{+\infty} \delta^{p(x)} \frac{\varphi\left(x,|u(x+h\omega)-u(x)|/\delta\right)}{h^{p(x)+1}}\,dh\,d\mathcal{H}^{n-1}(\omega)\,dx.
\end{aligned}
$$
By setting $h=\delta \tilde{h}$ and relabeling $\tilde{h}$ as $h$, we have
$$
\begin{aligned}
	\Lambda_{\delta}(u) & = \int_{\R^n}\int_{\Sf}\int_0^{+\infty} \delta^{p(x)} \frac{\varphi\left(x,|u(x+h\omega)-u(x)|/\delta\right)}{h^{p(x)+1}}\,dh\,d\mathcal{H}^{n-1}(\omega)\,dx \\
	& = \int_{\R^n}\int_{\Sf}\int_0^{+\infty} \delta^{p(x)} \frac{\varphi\left(x,|u(x+\delta \tilde{h}\omega)-u(x)|/\delta\right)}{\left(\delta\tilde{h}\right)^{p(x)+1}}\,\delta d\tilde{h}\,d\mathcal{H}^{n-1}(\omega)\,dx \\
	& = \int_{\R^n}\int_{\Sf}\int_0^{+\infty} \frac{\varphi\left(x,|u(x+\delta h\omega)-u(x)|/\delta\right)}{h^{p(x)+1}}\, dh\,d\mathcal{H}^{n-1}(\omega)\,dx ,
\end{aligned}
$$
so that
$$
\begin{aligned}
	\Lambda_{\delta}(u) := \int_{\R^n} \int_{\R^n} & \frac{\varphi_{\delta} \left(x,|u(x)-u(y)|\right)}{|x-y|^{n+p(x)}}\,dx\,dy \\
	& = \int_{\R^n}\int_{\Sf}\int_0^{+\infty} \frac{\varphi\left(x,|u(x+\delta h\omega)-u(x)|/\delta\right)}{h^{p(x)+1}}\, dh\,d\mathcal{H}^{n-1}(\omega)\,dx .
\end{aligned}
$$
Now, from the fundamental theorem of calculus, the inequalities chain
$$
\begin{aligned}
\frac{\left|u(x+\delta h \omega)-u(x)\right|}{\delta} & \le \frac{1}{\delta} \int_0^{\delta h} \left|\frac{d}{ds} u(x+s\omega)\right|\,ds \\
& = \frac{1}{\delta} \int_0^{\delta h} \left|{\nabla u(x+s\omega)\cdot\omega}\right|\,ds \le h\mathcal{M}_\omega \left(\nabla u\right)(x)
\end{aligned}
$$
follows for a.e. $(x,h,\omega)\in\R^n\times \R^+ \times\Sf$, where $\mathcal{M}_\omega \left(\nabla u\right)(x)$ denotes the maximal function of $\nabla u$ along the direction $\omega \in \Sf$ that, for a general function $f :\R^n\to\R$, is given by
$$
\mathcal{M}_\omega \left(f\right)(x) := \sup_{h>0} \frac{1}{h} \int_0^h \left|f(x+s\omega)\right|\,ds .
$$
Let's set
$$
\tilde{\varphi}(x,t) :=
\begin{cases}
	at^{p(x)+1}\,,		& t\in[0,1) \\
	b \,,					  &  t\in[1,+\infty)
\end{cases}.
$$
$\tilde{\varphi}(x,\cdot)$ is a non-decreasing function for a.e. $x\in\R^n$ and, according to hypothesis \eqref{Hp1} and \eqref{Hp2}, we have
$$
\varphi(x,t)\le\tilde{\varphi}(x,t), \quad \forall t \in \R , \,\,\text{for a.e.}\,\, x\in\R^n.
$$
For these reasons,
$$
\varphi\left(x,\left|u(x+\delta h \omega)-u(x)\right|/\delta\right)\le
\tilde{\varphi}\left(x,\left|u(x+\delta h \omega)-u(x)\right|/\delta\right) \le
\tilde{\varphi}\left(x,h\mathcal{M}_\omega \left(\nabla u\right)(x)\right)
$$
for a.e. $(x,h,\omega)\in\R^n\times \R^+ \times\Sf$, so -- being $\tilde{\varphi}$ a non-negative function -- we can increase $\Lambda_\delta (u)$ as follows
\begin{equation}
\label{eq2}
\begin{aligned}
	\Lambda_\delta (u) & = \int_{\R^n}\int_{\Sf} \int_0^{+\infty} \frac{\varphi\left(x,|u(x+\delta h\omega)-u(x)|/\delta\right)}{h^{p(x)+1}}\, dh\,d\mathcal{H}^{n-1}(\omega)\,dx \\
	& \le \int_{\R^n}\int_{\Sf}\int_0^{+\infty} \frac{\tilde{\varphi}\left(x,h\mathcal{M}_\omega \left(\nabla u\right)(x)\right)}{h^{p(x)+1}}\, dh\,d\mathcal{H}^{n-1}(\omega)\,dx \\
	& = \int_{\R^n} \int_\Sf \int_0^{+\infty} \tilde{\varphi}(x,t) \left(\frac{\mathcal{M}_\omega \left(\nabla u\right)(x)}{t}\right)^{p(x)+1} \frac{1}{\mathcal{M}_\omega \left(\nabla u\right)(x)} \,dt \,d\mathcal{H}^{n-1}(\omega) \,dx \\
	& = \int_{\R^n} \int_0^{+\infty} \tilde{\varphi}(x,t) t^{-(p(x)+1)} \,dt \int_\Sf  \left|\mathcal{M}_\omega \left(\nabla u\right)(x)\right|^{p(x)} \,d\mathcal{H}^{n-1}(\omega) \,dx ,
\end{aligned}
\end{equation}
where we have set $t=h\mathcal{M}_\omega \left(\nabla u\right)(x)$. We are now ready to prove the existence of a constant $C>0$, depending on $p^\pm$, such that
$$
	\Lambda_{\delta}(u) \le C\left(\|\nabla u\|_{L^{p^+}\left(\R^n\right)}^{p^+} + \|\nabla u\|_{L^{p^-}\left(\R^n\right)}^{p^-}\right).
$$
First of all, for how the function $\tilde{\varphi}$ was built, let's observe that, for a.e. $x\in\R^n$, the term
$$
\int_0^{+\infty} \tilde{\varphi}(x,t) t^{-(p(x)+1)} \,dt
$$
is bounded. In fact, recalling that $1 < p^- \le p(x) \le p^+ <+\infty$, we have
$$
\begin{aligned}
	\int_0^{+\infty} \tilde{\varphi}(x,t) t^{-(p(x)+1)} \,dt & = \int_0^1 \left(at^{p(x)+1}\right) t^{-(p(x)+1)} \,dt + \int_1^{+\infty} b t^{-(p(x)+1)} \,dt \\
	& \le a + b \int_1^{+\infty} \frac{1}{t^{p^-+1}} \,dt = \alpha <+\infty ,
\end{aligned}
$$
for a.e. $x\in\R^n$, so that equation \eqref{eq2} becomes
\begin{equation}
	\label{eq3}
	\Lambda_{\delta}(u)\le \alpha \int_{\R^n} \int_\Sf  \left|\mathcal{M}_\omega \left(\nabla u\right)(x)\right|^{p(x)} \,d\mathcal{H}^{n-1}(\omega) \,dx .
\end{equation}
At this point, the integral
$$
\int_{\R^n} \left|\mathcal{M}_\omega \left(\nabla u\right)(x)\right|^{p(x)} \,dx
$$
can be splitted over the sets of $x\in\R^n$ with
$$
\mathcal{M}_\omega \left(\nabla u\right)(x)\le1 \quad \text{or} \quad \mathcal{M}_\omega \left(\nabla u\right)(x)>1 ,
$$
in order to increase the integrand function by using, respectively, the exponents $p^-$ or $p^+$ and then by extending both the integrals over the entire space $\R^n$. In this way, we get
$$
\int_{\R^n} \left|\mathcal{M}_\omega \left(\nabla u\right)(x)\right|^{p(x)} \,dx \le \int_{\R^n} \left|\mathcal{M}_\omega \left(\nabla u\right)(x)\right|^{p^-} \,dx + \int_{\R^n} \left|\mathcal{M}_\omega \left(\nabla u\right)(x)\right|^{p^+} \,dx, \quad \forall \omega\in\Sf.
$$
By the theory of maximal functions, as treated in \cite{Stein} and adapted to the variable exponent case in \cite{FS-2020}, there exists positive constants $C_{p^\pm}$, depending on $p^\pm$, such that
$$
\begin{aligned}
	\int_{\R^n} \left|\mathcal{M}_\omega \left(\nabla u\right)(x)\right|^{p^-} \,dx & + \int_{\R^n} \left|\mathcal{M}_\omega \left(\nabla u\right)(x)\right|^{p^+} \,dx \\
	& \le C_{p^-} \int_{\R^n} \left|\nabla u(x)\right|^{p^-} \,dx + C_{p^+} \int_{\R^n} \left|\nabla u(x)\right|^{p^+} \,dx\\
	& \le C_{p^\pm} \left(\|\nabla u\|_{L^{p^-}\left(\R^n\right)}^{p^-} + \|\nabla u\|_{L^{p^+}\left(\R^n\right)}^{p^+}\right), \qquad \forall \omega\in\Sf ,
\end{aligned}
$$
where $C_{p^{\pm}}:=\max\left\{C_{p^-},C_{p^+}\right\}$. For this reason, equation \eqref{eq3} becomes
$$
\begin{aligned}
	\Lambda_\delta (u) & \le  \alpha \int_{\R^n} \int_\Sf  \left|\mathcal{M}_\omega \left(\nabla u\right)(x)\right|^{p(x)} \,d\mathcal{H}^{n-1}(\omega) \,dx . \\
	& \le \alpha\, C_{p^\pm} \left(\|\nabla u\|_{L^{p^-}\left(\R^n\right)}^{p^-} + \|\nabla u\|_{L^{p^+}\left(\R^n\right)}^{p^+}\right) \mathcal{H}^{n-1}\left(\Sf\right).
\end{aligned}
$$
The assertion follows taking $C:=\alpha\,C_{p^\pm}\,\mathcal{H}^{n-1}\left(\Sf\right)$.

\item[(b)] By the notion of directional derivative, let's observe that
\begin{equation}
\label{directionalderivative}
\lim_{\delta\to0^+} \frac{|u(x+\delta h\omega)-u(x)|}{\delta} = \left|{\nabla u(x)\cdot\omega}\right|h
\end{equation}
for a.e. $(x,h,\omega)\in\R^n\times \R^+ \times\Sf$. As a consequence, since -- for a.e. $x\in\R^n$ -- $\varphi(x,\cdot)$ is continuous at $0$ and almost everywhere on $(0,+\infty)$, we have
$$
\lim_{\delta\to0^+} \frac{1}{h^{p(x)+1}}\varphi\left(x,\frac{|u(x+\delta h\omega)-u(x)|}{\delta}\right) = \frac{1}{h^{p(x)+1}}\varphi\left(x,\left|{\nabla u(x)\cdot\omega}\right|h\right)
$$
for a.e. $(x,h,\omega)\in\R^n\times \R^+ \times\Sf$. Making the integral of this quantity over the sphere $\Sf$ and respect to $h\in[0,+\infty)$, by replacing $t=\left|{\nabla u(x)\cdot\omega}\right|h$, for a.e. $x\in\R^n$ we get
\begin{equation}
	\label{eq1}
	\begin{aligned}
		\int_\Sf \int_0^{+\infty} \frac{1}{h^{p(x)+1}}&\varphi\left(x,\left|{\nabla u(x)\cdot\omega}\right|h\right)\,dh\,d\mathcal{H}^{n-1}(\omega) \\
		& = \int_\Sf \int_0^{+\infty} \frac{\left|{\nabla u(x)	\cdot\omega}\right|^{p(x)+1}}{t^{p(x)+1}} \varphi(x,t) \frac{1}{\left|{\nabla u(x)	\cdot\omega}\right|}\,dt\,d\mathcal{H}^{n-1}(\omega) \\
		& = \int_0^{+\infty} \varphi(x,t) t^{-(p(x)+1)}\,dt \int_\Sf \left|{\nabla u(x)	\cdot\omega}\right|^{p(x)}\,d\mathcal{H}^{n-1}(\omega).
	\end{aligned}
\end{equation}
Since, for every $V\in\R^n$ and for all $x\in\R^n$,
$$
\int_{\Sf} \left|{V\cdot\omega}\right|^{p(x)} \, d\mathcal{H}^{n-1}(\omega) = |V|^{p(x)} \int_\Sf \left|{\omega\cdot\mathbf{e}}\right|^{p(x)} \, d\mathcal{H}^{n-1}(\omega),
\quad \mathbf{e}\in\Sf.
$$
taken $V=\nabla u(x)$, equation \eqref{eq1} becomes
\begin{equation}
\label{limite}
\begin{aligned}
	\int_\Sf \int_0^{+\infty} \frac{1}{h^{p(x)+1}}&\varphi\left(x,\left|{\nabla u(x)\cdot\omega}\right|h\right)\,dh\,d\mathcal{H}^{n-1}(\omega) \\
	& =  |\nabla u(x)|^{p(x)} \int_\Sf \left|{\omega\cdot\mathbf{e}}\right|^{p(x)} \, d\mathcal{H}^{n-1}(\omega) \int_0^{+\infty} t^{-(p(x)+1)}\varphi(x,t)\,dt \\
	& = |\nabla u(x)|^{p(x)} \gamma_{n,p(x)}\int_0^{+\infty} \varphi(x,t) t^{-(p(x)+1)}\,dt = |\nabla u(x)|^{p(x)},
\end{aligned}
\end{equation}
for a.e. $x\in\R^n$, where we used hypothesis \eqref{Hp4} on $\varphi$. Integrating the last equation with respect to $x$ over the space $\R^n$, we have
$$
	\int_{\R^n} \int_\Sf \int_0^{+\infty} \frac{1}{h^{p(x)+1}}\varphi\left(x,\left|{\nabla u(x)\cdot\omega}\right|h\right)\,dh\,d\mathcal{H}^{n-1}(\omega)\,dx = \int_{\R^n} |\nabla u(x)|^{p(x)} \, dx .
$$
Keeping into account what we have done in the first part of the proof, we are able to apply the dominated convergence theorem, so that
$$
\begin{aligned}
	\lim_{\delta\to0^+}\Lambda_{\delta}(u) & = \lim_{\delta\to0^+} \int_{\R^n}\int_{\Sf}\int_0^{+\infty} \frac{\varphi\left(x,|u(x+\delta h\omega)-u(x)|/\delta\right)}{h^{p(x)+1}}\, dh\,d\mathcal{H}^{n-1}(\omega)\,dx  \\
	& = \int_{\R^n}\int_{\Sf}\int_0^{+\infty} \lim_{\delta\to0^+}  \frac{\varphi\left(x,|u(x+\delta h\omega)-u(x)|/\delta\right)}{h^{p(x)+1}}\, dh\,d\mathcal{H}^{n-1}(\omega)\,dx  \\
	& = \int_{\R^n} \int_\Sf \int_0^{+\infty} \frac{1}{h^{p(x)+1}}\varphi\left(x,\left|{\nabla u(x)\cdot\omega}\right|h\right)\,dh\,d\mathcal{H}^{n-1}(\omega)\,dx = \int_{\R^n} |\nabla u(x)|^{p(x)} \, dx
\end{aligned}
$$
and the assertion follows. \\

We are now ready to discuss the case in which $\Omega\subset\R^n$ is a bounded and smooth domain.
\item[(b)] Let $D\Subset\Omega$ and fix $t>0$ small enough such that
$$
B(x,t)=\left\{y\in\R^n : |x-y| < t\right\} \Subset \Omega , \quad \forall x \in D.
$$
For every $u \in W^{1,p(\cdot)}\left(\Omega\right)$, we have
$$
\Lambda_\delta(u) \ge \int_D \int_{B(x,t)} \frac{\varphi_{\delta} \left(x,|u(x)-u(y)|\right)}{|x-y|^{n+p(x)}}\,dy\,dx.
$$
By using polar coordinates and -- more precisely -- making the change of variables $x-y=\delta h\omega$, for $\omega\in\Sf$, the previously equation becomes
$$
\begin{aligned}
\Lambda_\delta(u) & \ge \int_D \int_0^{t/\delta} \int_{\Sf} \delta^{p(x)} \frac{\varphi \left(x,|u(x+\delta h\omega)-u(x)|\slash\delta\right)}{\left(\delta h\right)^{n+p(x)}}\,\delta^n h^{n-1} dt\,d\mathcal{H}^{n-1}(\omega)\,dx \\
& = \int_D \int_0^{t/\delta} \int_{\Sf} \frac{\varphi \left(x,|u(x+\delta h\omega)-u(x)|\slash\delta\right)}{h^{p(x)+1}}\, dt\,d\mathcal{H}^{n-1}(\omega)\,dx
\end{aligned}
$$
By employing Fatou's lemma, remembering equations \eqref{directionalderivative} and \eqref{limite}, it follows
$$
\begin{aligned}
\liminf_{\delta\to0^+} \Lambda_\delta(u) & \ge \liminf_{\delta\to0^+} \int_D \int_0^{t/\delta} \int_{\Sf} \frac{\varphi \left(x,|u(x+\delta h\omega)-u(x)|\slash\delta\right)}{h^{p(x)+1}}\, dt\,d\mathcal{H}^{n-1}(\omega)\,dx \\
& \ge \int_D  \liminf_{\delta\to0^+} \int_0^{t/\delta} \int_{\Sf} \frac{\varphi \left(x,|u(x+\delta h\omega)-u(x)|\slash\delta\right)}{h^{p(x)+1}}\, dt\,d\mathcal{H}^{n-1}(\omega)\,dx \\
& = \int_D \int_0^{+\infty} \int_{\Sf} \frac{\varphi \left(x,\left|{\nabla u(x) \cdot \omega}\right| h \right)}{h^{p(x)+1}}\, dt\,d\mathcal{H}^{n-1}(\omega)\,dx = \int_D \left|\nabla u(x)\right|^{p(x)}\,dx,
\end{aligned}
$$
so, from the arbitrariness of $D\Subset\Omega$, we have
\begin{equation}
\label{liminf}
\liminf_{\delta\to0^+} \Lambda_\delta(u) \ge \int_\Omega \left|\nabla u(x)\right|^{p(x)}\,dx.
\end{equation}
Now, we want to prove that
$$
\lim_{\delta\to0^+} \Lambda_{\delta}(u) = \int_\Omega |\nabla u(x)|^{p(x)}\,dx, \quad \forall u\in W^{1,p^{\pm}}\left(\Omega\right).
$$
Let $u \in W^{1,p^\pm}\left(\Omega\right)$. Taken a bounded subset $V\subset\R^n$ containing the domain $\Omega$, there exists an extension $\tilde{u} \in W^{1,p^\pm}\left(\R^n\right)$ of $u$, defined over the entire space $\R^n$, such that
\begin{enumerate}
	\item[i.] $\tilde{u}(x) = u(x)$ for a.e. $x\in\Omega$;
\item[ii.] $\tilde{u}$ is compactly supported in $V$, that is $\supt(\tilde{u})\subset V$;
\item[iii.] there exist constants $C_\pm>0$, depending on $p^\pm$, $\Omega$ and $V$, such that
$$
\|\tilde{u}\|_{W^{1,p^+}\left(\R^n\right)} \le C_{+} \|u\|_{W^{1,p^+}\left(\Omega\right)}, \qquad
\|\tilde{u}\|_{W^{1,p^-}\left(\R^n\right)} \le C_{-} \|u\|_{W^{1,p^-}\left(\Omega\right)}
$$
and
$$
\|\tilde{u}\|_{L^{p^+}\left(\R^n\right)} \le C_+ \|u\|_{L^{p^+}\left(\Omega\right)}, \qquad
\|\tilde{u}\|_{L^{p^-}\left(\R^n\right)} \le C_- \|u\|_{L^{p^-}\left(\Omega\right)}.
$$
\end{enumerate}

Notice that it is possible to find an extension over the entire space $W^{1,p^\pm}\left(\R^n\right)$ because, in the classical extension theorem, the approximation sequences involved do not depend on the exponent $p$ of the Sobolev space considered, but only the constant $C$ -- satisfying the norms inequality -- depends on it (See \cite[Part II, Chapter 5]{Evans}).


Once we have extended $u$ through $\tilde{u}$, we have
$$
\Lambda_\delta(u) \le \int_{\Omega}dx \int_{\R^n} \frac{\varphi_{\delta} \left(x,|\tilde{u}(x)-\tilde{u}(y)|\right)}{|x-y|^{n+p(x)}}\,dy,
$$
so, by making the superior limit as $\delta\to0^+$ -- being now in the $\R^n$ case --, we have
$$
\begin{aligned}
\limsup_{\delta\to0^+} \Lambda_\delta(u) & \le \limsup_{\delta\to0^+} \int_{\Omega}dx \int_{\R^n} \frac{\varphi_{\delta} \left(x,|\tilde{u}(x)-\tilde{u}(y)|\right)}{|x-y|^{n+p(x)}}\,dy \\
& = \int_{\Omega}dx \lim_{\delta\to0^+} \int_{\R^n} \frac{\varphi_{\delta} \left(x,|\tilde{u}(x)-\tilde{u}(y)|\right)}{|x-y|^{n+p(x)}}\,dy \\
& = \int_{\Omega}|\nabla \tilde{u}(x)|^{p(x)}\,dx = \int_{\Omega}|\nabla u(x)|^{p(x)}\,dx .
\end{aligned}
$$
Remembering equation \eqref{liminf},
$$
\int_\Omega \left|\nabla u(x)\right|^{p(x)}\,dx \le \liminf_{\delta\to0^+} \Lambda_\delta(u) \le \limsup_{\delta\to0^+} \Lambda_\delta(u) \le \int_{\Omega}|\nabla u(x)|^{p(x)}\,dx,
$$
so that the limit exists and is
$$
\lim_{\delta\to0^+} \Lambda_\delta(u) =  \int_{\Omega}|\nabla u(x)|^{p(x)}\,dx,
$$
as we wanted to prove.

\item[(a)] Finally, let's show that
$$
\Lambda_{\delta}(u) \le C\left(\|\nabla u\|_{L^{p^+}\left(\Omega\right)}^{p^+} + \|\nabla u\|_{L^{p^-}\left(\Omega\right)}^{p^-}\right) .
$$
Up to replacing  $u(x)$ with $u(x)-\int_\Omega u(\xi) \, d\xi$,
we may assume that $\int_\Omega u(x) \, dx = 0$.
Then, since $\Omega$ is smooth -- as shown in \cite[Chapter 9]{Brezis} -- there is an extension $\tilde{u} \in W^{1,p^\pm}\left(\R^n\right)$ such that
$$
\int_{\R^n} |\nabla \tilde{u}(x) |^{p^+} \, dx \le C_{p^+} \int_{\R^n} |\nabla u(x) |^{p^+} \, dx \qquad \text{and} \qquad \int_{\R^n} |\nabla \tilde{u}(x) |^{p^-} \, dx \le C_{p^-} \int_{\R^n} |\nabla u(x) |^{p^-} \, dx .
$$
Being
$$
\begin{aligned}
\Lambda_\delta \left(u,\Omega\right) \le \Lambda_\delta \left(\tilde{u},\R^n\right) & \le \tilde{C} \left(\|\nabla \tilde{u}\|_{L^{p^+}\left(\Omega\right)}^{p^+} + \|\nabla \tilde{u}\|_{L^{p^-}\left(\Omega\right)}^{p^-}\right) \\
& \le \tilde{C} \left(C_{p^-}\|\nabla u\|_{L^{p^+}\left(\Omega\right)}^{p^+} + C_{p^+}\|\nabla u\|_{L^{p^-}\left(\Omega\right)}^{p^-}\right),
\end{aligned}
$$
the assertion follows.
%
\end{proof}

\noindent
We assume now that
\begin{equation}
	\label{Hp3}
	\varphi(x,\cdot) \text{ is a non-decreasing function},
\end{equation}
for a.e. $x\in\R^n$.

\begin{proposition}
	\label{suffcond1}
Let $1 < p^- \le p^+ < +\infty$ and let $u \in L^{p(\cdot)}\left(\Omega\right) \cap C^2\left(\Omega\right)$. Suppose that $\varphi$ satisfies the following properties
\begin{equation}
	\label{phi1}
	\exists \alpha,a>0 : \quad \alpha  t^{{p^+}+1}\leq \varphi(x,t) \le at^{p(x)+1} ,\quad \forall t \in [0,1],\,\, \text{for a.e.}\,\, x\in\R^n,
\end{equation}
\begin{equation}
	\label{phi2}
	\exists \beta,b>0 : \quad \beta\leq \varphi(x,t) \le b ,\quad \forall t \in \R^+ ,\,\, \text{for a.e.}\,\, x\in\R^n,
\end{equation}
and also hypothesis  \eqref{Hp4} and \eqref{Hp3}. Then
$$
		\limsup_{\delta \to 0^+} \Lambda_{\delta}(u) \ge \int_{\Omega} \left | \nabla u(x) \right |^{p(x)} \, dx.
		\label{prop2eq1}
$$
In particular, $u\in W^{1,p(\cdot)}(\Omega)$ whenever $\sup_{\delta>0} \Lambda_\delta(u)<+\infty$.
\end{proposition}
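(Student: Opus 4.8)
The plan is to obtain the inequality as an \emph{interior} lower bound, re-running the Fatou argument already used for the $\liminf$-part in the proof of the Theorem above; the new point is merely that $C^2$-regularity on $\Omega$ — although it gives no global integrability of $u$ or of $\nabla u$ — does guarantee classical differentiability of $u$ on every compactly contained subdomain, which is all that argument needs. So, first I would fix $D\Subset\Omega$ and choose $t>0$ so small that $B(x,t)\Subset\Omega$ for every $x\in D$; discarding the rest of the domain and changing variables $x-y=\delta h\omega$ in polar coordinates, exactly as in the proof of the Theorem, yields
$$
\Lambda_\delta(u)\ \ge\ \int_D\int_0^{t/\delta}\int_{\Sf}\frac{\varphi\big(x,|u(x+\delta h\omega)-u(x)|/\delta\big)}{h^{p(x)+1}}\,d\mathcal{H}^{n-1}(\omega)\,dh\,dx.
$$

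Next I would let $\delta\to 0^+$ and apply Fatou's lemma, first in $x$ over $D$ and then in $(h,\omega)$ over $(0,+\infty)\times\Sf$ (noting that $\chi_{\{h<t/\delta\}}\to 1$). Since $u$ is $C^1$ on a neighbourhood of $\overline D$, the directional-derivative identity \eqref{directionalderivative} holds at every $x\in D$, so $|u(x+\delta h\omega)-u(x)|/\delta\to|\nabla u(x)\cdot\omega|\,h$; together with the continuity of $\varphi(x,\cdot)$ at $0$ and off a finite set, the integrand converges to $h^{-(p(x)+1)}\varphi\big(x,|\nabla u(x)\cdot\omega|h\big)$ for a.e. $(x,h,\omega)$. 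Invoking then \eqref{limite} — which packages the normalization \eqref{Hp4} and the identity $\int_{\Sf}|V\cdot\omega|^{p(x)}\,d\mathcal{H}^{n-1}(\omega)=|V|^{p(x)}\gamma_{n,p(x)}$ — one obtains $\liminf_{\delta\to0^+}\Lambda_\delta(u)\ge\int_D|\nabla u(x)|^{p(x)}\,dx$, and letting $D\uparrow\Omega$ along an exhaustion by compactly contained subdomains (monotone convergence) gives
$$
\limsup_{\delta\to0^+}\Lambda_\delta(u)\ \ge\ \liminf_{\delta\to0^+}\Lambda_\delta(u)\ \ge\ \int_\Omega|\nabla u(x)|^{p(x)}\,dx,
$$
which is the claimed bound (indeed, slightly more).

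For the last assertion I would argue that if $\sup_{\delta>0}\Lambda_\delta(u)<+\infty$, then the inequality just proved forces $\int_\Omega|\nabla u(x)|^{p(x)}\,dx<+\infty$; since $\Omega$ is bounded and $p^+<+\infty$, finiteness of this modular is equivalent to $\nabla u\in L^{p(\cdot)}(\Omega)$ (see, e.g., \cite{DHHR}), and as $u\in L^{p(\cdot)}(\Omega)$ by hypothesis while the classical gradient of $u\in C^2(\Omega)$ coincides with its distributional gradient, we conclude $u\in W^{1,p(\cdot)}(\Omega)$.

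I do not expect a serious obstacle: the statement is strictly weaker than the $\liminf$ computation already performed for the Theorem, so the real content is (i) observing that $C^2(\Omega)$ suffices in place of global Sobolev regularity, which is immediate after localizing to $D\Subset\Omega$, and (ii) the bookkeeping in the double Fatou passage — namely checking that the set of $(x,h,\omega)$ where the integrand fails to converge to the expected limit is negligible (it consists of the points where $\nabla u(x)\cdot\omega=0$, where the limit is $\varphi(x,0)=0$ and is hence $\le$ the integrand, or where $|\nabla u(x)\cdot\omega|h$ meets one of the finitely many discontinuities of $\varphi(x,\cdot)$). It is worth remarking that the whole argument is interior, so, in contrast with the upper bound of the Theorem, no extension operator or boundary regularity is invoked, and the lower bounds in \eqref{phi1}, \eqref{phi2} and the monotonicity \eqref{Hp3} are not used.
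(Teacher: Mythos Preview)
Your argument is correct and yields the stated inequality (in fact the stronger $\liminf$ version), but it is genuinely different from the route the paper takes. The paper does \emph{not} re-run the interior Fatou argument: instead it introduces the averaged quantity
\[
T(\varepsilon,\delta_0)=\int_0^{\delta_0}\varepsilon\delta^{\varepsilon-1}\Lambda_\delta(u)\,d\delta,
\]
performs the change of variables $t=|u(x)-u(y)|/\delta$ to factor out $\varepsilon|u(x)-u(y)|^{p(x)+\varepsilon}/|x-y|^{n+p(x)}$, bounds the remaining $t$-integral from below using the \emph{lower} bounds $\alpha,\beta$ in \eqref{phi1}--\eqref{phi2}, and then invokes the external estimate from \cite{FS-2020}
\[
\int_\Omega \gamma_{n,p(x)}|\nabla u|^{p(x)}\,dx\ \le\ \liminf_{\varepsilon\to0^+}\int_\Omega\!\int_\Omega \frac{\varepsilon|u(x)-u(y)|^{p(x)+\varepsilon}}{|x-y|^{n+p(x)}}\,dx\,dy
\]
(valid for $u\in L^{p(\cdot)}\cap C^2$) to close the loop; unbounded $u$ is then handled by a smooth truncation ${\mathcal T}_M(u)$, which is where the monotonicity \eqref{Hp3} enters. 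Your approach is more elementary and self-contained, and it makes clear that the lower bounds in \eqref{phi1}, \eqref{phi2} and the monotonicity \eqref{Hp3} are not actually needed for this proposition --- only $C^1$-differentiability on compact subsets, the normalization \eqref{Hp4}, and nonnegativity of $\varphi$ are used. What the paper's approach buys is a template that links $\Lambda_\delta$ to Gagliardo-type seminorms via the $T(\varepsilon,\delta_0)$ averaging, which may be of independent interest, but for the bare statement your shortcut is cleaner.
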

\begin{proof}
It is sufficient to treat the case
$$
		F:=\limsup_{\delta \to 0^+} \Lambda_{\delta}(u) < +\infty ,
		\label{prop2eq2}
$$
otherwise the inequality is obvious.
First of all, let us suppose further that $u \in L^{\infty}(\Omega)$ and let
\begin{equation}
\label{prop2eq3}
A := 2 \|u\|_{L^{\infty}\left(\Omega\right)}.
\end{equation}
Taken $\delta_{0}\in(0,1)$ and fixed $\varepsilon \in (0,1\slash2)$, we have
\begin{equation}
\label{prop2eq4}
		T(\varepsilon, \delta_{0}) := \int_{0}^{\delta_{0}} \varepsilon \delta^{\varepsilon -1} \Lambda_{\delta}(u) \, d\delta =  \int_{0}^{\delta_{0}} \varepsilon \delta^{\varepsilon -1} \,d\delta \int_{\Omega} \int_{\Omega}  \frac{\delta^{p(x)} \varphi \left (x, \left | u(x)-u(y) \right | / \delta \right )}{\left | x-y \right |^{n+p(x)}} \,dx\,dy.
\end{equation}
By making the change of variables $t = \left| u(x)-u(y) \right| \slash \delta$, we get
	$$ 
	\begin{aligned}
	& T(\varepsilon, \delta_{0}) \\
	& = \int_{\Omega} \int_{\Omega} \int_{+\infty}^{\frac{\left | u(x)-u(y) \right |}{\delta_{0}}} \frac{\varepsilon \left | u(x)-u(y) \right | ^{\varepsilon-1}}{t^{\varepsilon-1}} \frac{\left | u(x)-u(y) \right |^{p(x)}}{t^{p(x)}}  \left ( - \frac{\left | u(x)-u(y) \right |}{t^2} \right ) \frac{\varphi(x,t)}{|x-y| ^{n+p(x)}} \, dt \, dx \, dy \\
	& = \int_{\Omega} \int_{\Omega}  \frac{\varepsilon \left | u(x)-u(y) \right | ^{p(x)+ \varepsilon}}{\left | x-y \right |^{n+p(x)}} \,dx\,dy \int_{\left | u(x)-u(y) \right | / \delta_{0}}^{+\infty} \varphi(x,t) t^{-1-p(x)-\varepsilon} \,dt .
	\end{aligned}
	$$
It follows that
	\begin{equation}
	\begin{aligned}
		\label{eq_58}
	T(\varepsilon, \delta_{0}) & := \int_{\Omega} \int_{\Omega}  \frac{\varepsilon \left | u(x)-u(y) \right | ^{p(x)+ \varepsilon}}{\left | x-y \right |^{n+p(x)}} \,dx\,dy \int_{\left | u(x)-u(y) \right | / \delta_{0}}^{+\infty} \varphi(x,t) t^{-1-p(x)-\varepsilon} \,dt \\
	& \ge \underset{ \left | u(x)-u(y) \right | < \delta_{0}^2}{\int_\Omega \int_\Omega} \frac{\varepsilon \left | u(x)-u(y) \right | ^{p(x)+ \varepsilon}}{\left | x-y \right |^{n+p(x)}} \,dx\,dy \int_{\left | u(x)-u(y) \right | / \delta_{0}}^{+\infty} \varphi(x,t) t^{-1-p(x)-\varepsilon} \,dt \\
	& \ge c_0 \underset{ \left | u(x)-u(y) \right | < \delta_{0}^2 }{\int_\Omega \int_\Omega} \frac{\varepsilon \left | u(x)-u(y) \right | ^{p(x)+ \varepsilon}}{\left | x-y \right |^{n+p(x)}} \,dx\,dy
	\end{aligned}
	\end{equation}
where, using \eqref{phi1} and \eqref{phi2}, we introduced $c_0>0$ (depending only on $p$ and $\beta$)
through the following inequalities (recall that 
$|u(x)-u(y)|/\delta_{0} < \delta_{0}$ on the integration set)
	$$
	\begin{aligned}
	\int_{\left | u(x)-u(y) \right | / \delta_{0}}^{+\infty} \varphi(x,t) t^{-1-p(x)-\varepsilon} \,dt & \ge \int_{\delta_0}^1 \alpha t^{1+p^+} t^{-1-p(x)-\varepsilon}\,dt + \int_1^{+\infty} \beta t^{-1-p(x)-\varepsilon}\,dt \\
	& \ge \alpha \int_{\delta_0}^1 t^{p^+ - p^- -\varepsilon}\,dt + \beta \int_1^{+\infty} t^{-1 - p^+ - \varepsilon}\,dt
	\ge \frac{\beta}{2p^+ }= : c_0 
	\end{aligned}
	$$
Now equation \eqref{eq_58} can be written as
$$
T(\varepsilon, \delta_{0}) \ge c_0 {\int_\Omega \int_\Omega} \frac{\varepsilon \left | u(x)-u(y) \right | ^{p(x)+ \varepsilon}}{\left | x-y \right |^{n+p(x)}} \,dx\,dy - c_0 \underset{ \left | u(x)-u(y) \right | \ge {\delta_{0}}^2 }{\int_\Omega \int_\Omega} \frac{\varepsilon A^{p(x)+\varepsilon}}{|x-y|^{n+p(x)}}\,dx\,dy ,
$$
remembering that we posed $A := 2 \|u\|_{L^\infty\left(\Omega\right)}$.
Let $\tau > 0$ and $\delta_0$ small enough such that
\begin{equation}
	\label{stima_c}
c_0 \ge (1-\tau) \left( \int_\Sf \left|\omega\cdot  \mathbf{e}\right|^{p^+} \, d\mathcal{H}^{n-1}(\omega) \right)^{-1}
\end{equation}
and
\begin{equation}
	\label{Ftau}
\Lambda_{\delta}(u) \le F + \tau, \qquad \forall \delta \in \left(0,\delta_0\right). 
\end{equation}
We have
\begin{equation}
	\label{integrale_finito}
\underset{\left|u(x)-u(y)\right|\ge\gamma}{\int_\Omega \int_\Omega} \frac{1}{|x-y|^{n+p(x)}}\,dx\,dy < +\infty , \qquad \forall \gamma > 0.
\end{equation}
In fact, let's fix $t_0 > 0$ such that $\inf_\Omega\varphi\left(x,t_0\right) > 0$ and observe that
$$
\begin{aligned}
\Lambda_{\delta}(u) \ge \underset{\left|u(x)-u(y)\right|\ge\gamma}{\int_\Omega \int_\Omega} & \frac{\varphi_{\delta}\left(x, \left|u(x)-u(y)\right|\right)}{|x-y|^{n+p(x)}}\,dx\,dy\\
& \ge \delta^{p^+} \inf_{x\in\Omega} \varphi\left(x, \gamma/\delta\right) \underset{\left|u(x)-u(y)\right|\ge\gamma}{\int_\Omega \int_\Omega} \frac{1}{|x-y|^{n+p(x)}}\,dx\,dy.
\end{aligned}
$$
Taken $0<\delta <\min\left\{\delta_0,\gamma/t_0\right\}$ and applying equation \eqref{Ftau}, we get
$$
F+\tau \ge \delta^{p^+} \inf_{x\in\Omega} \varphi\left(x, \gamma/\delta\right) \underset{\left|u(x)-u(y)\right|\ge\gamma}{\int_\Omega \int_\Omega} \frac{1}{|x-y|^{n+p(x)}}\,dx\,dy ,
$$
so \eqref{integrale_finito} follows since $F<+\infty$. Thanks to this equation, we have
$$
\begin{aligned}
0 \le \lim_{\varepsilon\to0^+} c_0 & \underset{ \left | u(x)-u(y) \right | \ge \delta_{0}^2 }{\int_\Omega \int_\Omega}  \frac{\varepsilon A^{p(x)+\varepsilon}}{|x-y|^{n+p(x)}}\,dx\,dy\\
&  \le \lim_{\varepsilon\to0^+} \varepsilon c_0 \max\left\{A^{p^++\varepsilon},A^{p^-+\varepsilon}\right\} \underset{ \left | u(x)-u(y)\right| \ge \delta_{0}^2 }{\int_\Omega \int_\Omega} \frac{1}{|x-y|^{n+p(x)}}\,dx\,dy = 0 ,
\end{aligned}
$$
so that
\begin{equation}
\begin{aligned}
	\label{limite2}
\liminf_{\varepsilon\to0^+} T(\varepsilon, \delta_{0})  \ge  \liminf_{\varepsilon\to0^+} & \, c_0 {\int_\Omega \int_\Omega} \frac{\varepsilon \left | u(x)-u(y) \right | ^{p(x)+ \varepsilon}}{\left | x-y \right |^{n+p(x)}} \,dx\,dy \\
& - \lim_{\varepsilon\to0^+}c_0 \underset{ \left | u(x)-u(y) \right | \ge {\delta_{0}}^2 }{\int_\Omega \int_\Omega} \frac{\varepsilon A^{p(x)+\varepsilon}}{|x-y|^{n+p(x)}}\,dx\,dy \\
& = \liminf_{\varepsilon\to0^+} c_0 {\int_\Omega \int_\Omega} \frac{\varepsilon \left | u(x)-u(y) \right | ^{p(x)+ \varepsilon}}{\left | x-y \right |^{n+p(x)}} \,dx\,dy .
\end{aligned}
\end{equation}
Remembering -- as shown in \cite{FS-2020} -- that, for every $u \in L^{p(\cdot)}\left(\Omega\right) \cap C^{2}\left(\Omega\right)$ it holds
$$
		\int_{\Omega} \gamma_{n,p(x)} \left|\nabla u(x)\right|^{p(x)} \, dx \le \liminf_{\varepsilon\to0^+} \int_{\Omega} \int_{\Omega} \frac{\varepsilon\left|u(x)-u(y)\right|^{p(x)+\varepsilon}}{|x-y|^{n+p(x)}} \, dx \, dy
$$
 and -- being $\gamma_{n,p^+} \le \gamma_{n,p(x)}$ for a.e. $x\in\Omega$ -- we have
\begin{equation}
			\label{prop2eq14}
		\int_{\mathbb{S}^{n-1}} \left|\,\omega\cdot  \mathbf{e}\,\right|^{p^{+}}\,d\mathcal{H}^{n-1}(\omega) \int_{\Omega} \left|\nabla u(x)\right|^{p(x)} \, dx \le \liminf_{\varepsilon\to0^+} \int_{\Omega} \int_{\Omega} \frac{\varepsilon\left|u(x)-u(y)\right|^{p(x)+\varepsilon}}{|x-y|^{n+p(x)}} \, dx \, dy.
\end{equation}
We are now ready to conclude. By combining \eqref{limite2}, \eqref{stima_c} and \eqref{prop2eq14} , we get
	\begin{equation}
		\begin{aligned}
		\liminf_{\varepsilon \to 0^+} T(\varepsilon, \delta_{0}) & \ge \liminf_{\varepsilon\to0^+} c_0 {\int_\Omega \int_\Omega} \frac{\varepsilon \left | u(x)-u(y) \right | ^{p(x)+ \varepsilon}}{\left | x-y \right |^{n+p(x)}} \,dx\,dy \\
		&  \ge (1-\tau) \left(\int_\Sf \left|\omega\cdot  \mathbf{e}\right|^{p^+} \, d\mathcal{H}^{n-1}(\omega)\right)^{-1} \liminf_{\varepsilon\to0^+} {\int_\Omega \int_\Omega} \frac{\varepsilon \left | u(x)-u(y) \right | ^{p(x)+ \varepsilon}}{\left | x-y \right |^{n+p(x)}} \,dx\,dy \\
		& \ge (1-\tau)  \int_{\Omega} \left | \nabla u(x) \right |^{p(x)}\,dx .
		 \end{aligned}
		\label{prop2eq15}
	\end{equation}
From the definition of $T\left(\varepsilon,\delta_0\right)$ stated in \eqref{prop2eq4} and equation \eqref{Ftau}, we have
	$$
	T(\varepsilon, \delta_{0})   := \int_{0}^{\delta_{0}} \varepsilon \delta^{\varepsilon -1} \Lambda_{\delta}(u) \, d\delta \le \int_{0}^{\delta_{0}} \varepsilon \delta^{\varepsilon -1} (F+ \tau) \,d\delta = (F+\tau) \delta_{0}^{\varepsilon}
	$$
	so it follows that
	\begin{equation}
		\limsup_{\varepsilon \to 0^+} T(\varepsilon, \delta_{0}) \le F + \tau .
		\label{prop2eq16}
	\end{equation}
	Thanks to equations \eqref{prop2eq15} and \eqref{prop2eq16}, we have
	$$
	\limsup_{\delta \to 0^+} \Lambda_{\delta}(u) + \tau \ge \limsup_{\varepsilon \to 0^+} T(\varepsilon, \delta_{0}) \ge \liminf_{\varepsilon \to 0^+} T(\varepsilon, \delta_{0}) \ge (1-\tau)  \int_{\Omega} \left | \nabla u(x) \right |^{p(x)}\,dx,
	$$
	so for the arbitrariness of $\tau>0$, we finally get
	$$
	\limsup_{\delta \to 0^+} \Lambda_{\delta}(u) \ge \int_{\Omega} \left | \nabla u(x) \right |^{p(x)} \,dx
	$$
	for all $u \in L^{p(\cdot)}\left(\Omega\right) \cap C^{2}\left(\Omega\right)$ bounded. 
	
	If instead $u$ is not bounded, for $M>0$
	let ${\mathcal T}_M\in C^\infty(\R)$ be such that ${\mathcal T}_M(s)=s$ if $|s|\leq M$ and ${\mathcal T}_M(s)=M+1$ if $|s|\geq M+1$
	and denote $u_M := {\mathcal T}_M(u).$
	Then, we have 
	\begin{equation*}
		|{\mathcal T}_M(s_1) - {\mathcal T}_M(s_2)| \le  |s_1 - s_2|,\quad 
		\mbox{ for all } s_1, s_2 \in \R. 
	\end{equation*} 
	Hence $\Lambda_\delta(u_M)
	\le  \Lambda_{\delta}(u)$ by the monotonicity of $\varphi$ and the assertion follows from the previous case
	by the arbitrariness of $M$ since $u_M\to u$ in $W^{1,p(\cdot)}(\Omega)$ by dominated convergence.
	
	
	
\end{proof}


\subsection{The case $p^- =1$} We have the following main result

\begin{theorem}
	\label{prop11}
	Let $1\leq p^-\le p^+<+\infty$. Assume \eqref{Hp4}, \eqref{Hp1}, \eqref{Hp2} and \eqref{Hp3}. Then 
$$
		\lim_{\delta \to 0^+} \Lambda_{\delta} (u) = \int_{\Omega} \left | \nabla u(x) \right |^{p(x)}\,dx 
		\label{prop11eq1}
$$
	for every $u\in C^1\left(\bar{\Omega}\right)$ or -- in the case $\Omega=\R^n$ -- for every $u \in C^1_c\left(\R^n\right)$. Also,  we have
	\begin{equation}
		\liminf_{\delta \to 0^+} \Lambda_{\delta} (u) \ge \int_{\Omega} \left | \nabla u(x) \right |^{p(x)}\,dx.
		\label{prop11eq2}
	\end{equation} 
	Hence, $u \in W^{1,p(\cdot)}(\Omega)$ provided that
	$\liminf_{\delta \to 0^+} \Lambda_{\delta} (u) <+\infty$.
\end{theorem}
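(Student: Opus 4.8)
The plan is to establish, separately, the two one‑sided estimates
$$
\liminf_{\delta\to0^+}\Lambda_\delta(u)\ge\int_\Omega|\nabla u(x)|^{p(x)}\,dx
\qquad\text{and}\qquad
\limsup_{\delta\to0^+}\Lambda_\delta(u)\le\int_\Omega|\nabla u(x)|^{p(x)}\,dx ,
$$
the first being \eqref{prop11eq2} and holding already for $u\in L^{p(\cdot)}(\Omega)\cap C^2(\Omega)$ (or, more generally, $u\in W^{1,1}_{\mathrm{loc}}(\Omega)\cap L^{p(\cdot)}(\Omega)$), the second only for $u\in C^1(\bar\Omega)$, resp.\ $u\in C^1_c(\R^n)$ when $\Omega=\R^n$; combining them gives the limit, the stated value, and -- via the first estimate and $u\in L^{p(\cdot)}(\Omega)$ -- the $W^{1,p(\cdot)}$‑membership. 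The feature peculiar to $p^-=1$ is that the route to the upper bound used for $p^->1$ fails: one can no longer dominate $\int|\mathcal M_\omega(\nabla u)|^{p(\cdot)}$ by $\int|\nabla u|^{p(\cdot)}$, since the directional maximal operator is unbounded on $L^{p(\cdot)}$ when $p$ reaches $1$, as recalled in the Introduction; the remedy is to use the $L^\infty$ bound on $\nabla u$, which is precisely why the admissible class shrinks to $C^1$.

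For the \emph{lower bound} I repeat the Fatou step from the bounded‑domain part of the proof for $p^->1$. Fixing $D\Subset\Omega$ and $t>0$ with $B(x,t)\Subset\Omega$ for all $x\in D$, restricting the integral to $D\times B(x,t)$ and passing to polar coordinates $x-y=\delta h\omega$,
$$
\Lambda_\delta(u)\ge\int_D\int_0^{t/\delta}\int_{\Sf}\frac{\varphi\bigl(x,|u(x+\delta h\omega)-u(x)|/\delta\bigr)}{h^{p(x)+1}}\,d\mathcal H^{n-1}(\omega)\,dh\,dx .
$$
Since $u\in C^1(\bar D)$, the difference quotients converge as in \eqref{directionalderivative}, and because $\varphi(x,\cdot)$ is continuous at $0$ and off a finite set, the integrand converges a.e.\ on $D\times\R^+\times\Sf$; Fatou's lemma together with \eqref{limite} -- which encodes the normalisation \eqref{Hp4} and only needs $\int_0^{+\infty}\varphi(x,t)t^{-(p(x)+1)}\,dt<+\infty$, true for all $p(x)\ge1$ by \eqref{Hp1}--\eqref{Hp2} -- gives $\liminf_{\delta\to0^+}\Lambda_\delta(u)\ge\int_D|\nabla u(x)|^{p(x)}\,dx$, and letting $D\uparrow\Omega$ yields \eqref{prop11eq2}. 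No step here uses $p^->1$ or any maximal inequality, so if $\liminf_{\delta\to0^+}\Lambda_\delta(u)<+\infty$ then $\int_\Omega|\nabla u|^{p(x)}\,dx<+\infty$, whence $u\in W^{1,p(\cdot)}(\Omega)$.

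For the \emph{upper bound}, write $L=\|\nabla u\|_{L^\infty}$. If $\Omega$ is bounded and smooth, extend $u$ to $\tilde u\in C^1_c(\R^n)$ with $\|\nabla\tilde u\|_{L^\infty}\le C_\Omega L$; enlarging the inner domain to $\R^n$,
$$
\Lambda_\delta(u)\le\int_\Omega F_\delta(x)\,dx ,\qquad F_\delta(x):=\int_{\R^n}\frac{\varphi_\delta\bigl(x,|\tilde u(x)-\tilde u(y)|\bigr)}{|x-y|^{n+p(x)}}\,dy .
$$
Writing $F_\delta(x)$ in polar coordinates, using $|\tilde u(x+\delta h\omega)-\tilde u(x)|/\delta\le h\|\nabla\tilde u\|_{L^\infty}$ and the monotonicity \eqref{Hp3}, one dominates the integrand by $\varphi(x,h\|\nabla\tilde u\|_{L^\infty})\,h^{-(p(x)+1)}$, whose $(h,\omega)$‑integral equals $\mathcal H^{n-1}(\Sf)\,\|\nabla\tilde u\|_{L^\infty}^{p(x)}\,\gamma_{n,p(x)}^{-1}$ after the substitution $t=h\|\nabla\tilde u\|_{L^\infty}$ and \eqref{Hp4}; since $\gamma_{n,p(x)}\ge\gamma_{n,p^+}>0$ and $p^-\le p(x)\le p^+$, this is a bounded function of $x$, hence integrable over the bounded set $\Omega$. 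Dominated convergence in $(h,\omega)$ and \eqref{limite} give $F_\delta(x)\to|\nabla\tilde u(x)|^{p(x)}$ for a.e.\ $x$, and a second dominated convergence in $x$ yields $\limsup_{\delta\to0^+}\Lambda_\delta(u)\le\int_\Omega|\nabla\tilde u|^{p(x)}\,dx=\int_\Omega|\nabla u|^{p(x)}\,dx$. When $\Omega=\R^n$ and $u\in C^1_c(\R^n)$ one argues likewise after splitting $\int_{\R^n}F_\delta$ over $B(0,2R)$ and its complement, where $\supt u\subset B(0,R)$ with $R\ge1$: on the ball one proceeds as above, and for $|x|>2R$ one has $u(x)=0$ and $|x-y|\ge|x|-R$ on $\supt u$, whence $F_\delta(x)\le b\,\delta^{p(x)}|B(0,R)|\,(|x|-R)^{-(n+p(x))}\le b\,|B(0,R)|\,(|x|-R)^{-(n+1)}$ for $\delta<1$, an integrable function on $\{|x|>2R\}$ that tends to $0$; dominated convergence applies once more.

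Combining the two estimates, for $u\in C^1(\bar\Omega)$ (resp.\ $u\in C^1_c(\R^n)$) one obtains
$$
\int_\Omega|\nabla u|^{p(x)}\le\liminf_{\delta\to0^+}\Lambda_\delta(u)\le\limsup_{\delta\to0^+}\Lambda_\delta(u)\le\int_\Omega|\nabla u|^{p(x)} ,
$$
so $\lim_{\delta\to0^+}\Lambda_\delta(u)=\int_\Omega|\nabla u(x)|^{p(x)}\,dx$, which is the asserted identity. I expect the real difficulty to be the upper bound in the regime $p^-=1$: the $L^{p(\cdot)}$ boundedness of the directional maximal operator -- the engine of the case $p^->1$ -- is no longer at one's disposal, forcing the switch to the $L^\infty$ control of $\nabla u$; and for $\Omega=\R^n$ one must additionally produce an integrable majorant near spatial infinity, which is where the compact support of $u$ intervenes -- routine, but to be checked by hand.
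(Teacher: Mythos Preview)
Your argument is correct and follows essentially the same strategy as the paper: the lower bound via Fatou on compactly contained $D\Subset\Omega$, the replacement of the maximal inequality by the uniform Lipschitz bound $|u(x+\delta h\omega)-u(x)|/\delta\le h\|\nabla u\|_{L^\infty}$, and (for $\Omega=\R^n$) the splitting into a large ball and its complement are all exactly as in the paper's proof.

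The only noteworthy difference is in the bounded-domain upper bound. The paper stays inside $\Omega$ and decomposes $\Lambda_\delta(u)=A_\delta+B_\delta+C_\delta$ with $A_\delta$ the ``core'' integral over $D\times B(x,t)$, $B_\delta$ over $D\times(\Omega\setminus B(x,t))$, and $C_\delta$ over $(\Omega\setminus D)\times\Omega$, showing $A_\delta\to\int_D|\nabla u|^{p(x)}$, $B_\delta\to0$, and $C_\delta\le C\,\mathcal L^n(\Omega\setminus D)$. You instead extend $u$ to $\tilde u\in C^1_c(\R^n)$, enlarge the inner integral to $\R^n$, and use dominated convergence twice with the majorant $\mathcal H^{n-1}(\Sf)\,\|\nabla\tilde u\|_{L^\infty}^{p(x)}\gamma_{n,p(x)}^{-1}$; this mirrors the paper's own treatment of the case $p^->1$ and is slightly more streamlined. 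A further small variation: you invoke the monotonicity \eqref{Hp3} of $\varphi$ directly, whereas the paper passes through the auxiliary non-decreasing majorant $\tilde\varphi$ built from \eqref{Hp1}--\eqref{Hp2}. Both routes are equally valid here.
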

\begin{proof}
	Let us first consider the case $\Omega=\R^n$ and $u \in C_{c}^{1}(\R^n)$. Taken $M>1$ such that $u(x)=0$ if $|x| \ge M-1$, we can split $\Lambda_{\delta} (u)$ as follows
	\begin{equation}
		\label{proposition11eqz}
		\Lambda_{\delta} (u) = \int_{|x|>M}dx \int_{\R^n} \frac {\varphi_{\delta} \left ( x, \left | u(x)-u(y) \right | \right )}{\left | x-y \right |^{n+p(x)}} \,dy +  \int_{|x| \le M}dx \int_{\R^n} \frac {\varphi_{\delta} \left ( x, \left | u(x)-u(y) \right | \right )}{\left | x-y \right |^{n+p(x)}} \,dy.
	\end{equation}
	Since $\varphi$ is bounded and $\varphi(x,0) = 0$ for a.e. $x\in\R^n$, being
	$$
	\int_{|x|>M}dx \int_{|y|<M-1} \frac {1}{\left | x-y \right |^{n+p(x)}} \,dy < +\infty,
	$$
	from the choice of $M$, we have for all $\delta>0$
$$
		\begin{aligned}
			\int_{|x|>M} dx \int_{\R^n} \frac {\varphi_{\delta} \left ( x, \left | u(x)-u(y) \right | \right )}{\left | x-y \right |^{n+p(x)}} \, dy & = \int_{|x|>M} \delta^{p(x)} dx \int_{|y|<M-1} \frac {\varphi \left ( x, \left | u(x)-u(y) \right |\slash \delta \right )}{\left | x-y \right |^{n+p(x)}} \, dy \\
			& \le C\,\delta \int_{|x|>M}dx \int_{|y|<M-1} \frac {1}{\left | x-y \right |^{n+p(x)}} \, dy , 
		\end{aligned}
$$
	so, letting $\delta\to0^+$,
	$$
		\lim_{\delta\to0^+}\int_{|x|>M} dx \int_{\R^n} \frac {\varphi_{\delta} \left ( x, \left | u(x)-u(y) \right | \right )}{\left | x-y \right |^{n+p(x)}} \, dy = 0.
	$$
	Let's now consider the second integral of \eqref{proposition11eqz}. Through the change of variables $z = x - y$ and by using polar coordinates for $z$, we get
$$
		\begin{aligned}
			\int_{|x| \le M}dx & \int_{\R^n} \frac {\varphi_{\delta} \left ( x,\left | u(x)-u(y) \right | \right )}{\left | x-y \right |^{n+p(x)}}\, dy \\
			& =  \int_{|x| \le M}dx \int_{0}^{+\infty} dh \int_{\Sf} \frac {\varphi_{\delta} \left (x, \left | u(x+h\omega)-u(x) \right | \right )}{h^{p(x)+1}} \,d\mathcal{H}^{n-1}(\omega) \\
			& = \int_{|x| \le M}\delta^{p(x)} \, dx \int_{0}^{+\infty} dh \int_{\Sf} \frac {\varphi \left (x, \left | u(x+h\omega)-u(x) \right | / \delta \right )}{h^{p(x)+1}} \,d\mathcal{H}^{n-1}(\omega) .
		\end{aligned}
$$
	By setting $h = \delta \tilde{h}$ and relabeling $\tilde{h}$ as $h$, we have
	$$
	\begin{aligned}
		\int_{|x| \le M}\delta^{p(x)}\,dx \int_{0}^{+\infty} dh & \int_{\Sf} \frac {\varphi \left (x, \left | u(x+h\omega)-u(x) \right | / \delta \right )}{h^{p(x)+1}} \,d\mathcal{H}^{n-1}(\omega) \\
		& = \int_{|x| \le M}dx \int_{0}^{+\infty} dh \int_{\Sf} \frac {\varphi \left (x, \left | u(x+\delta h\omega)-u(x) \right | / \delta \right )}{h^{p(x)+1}} \,d\mathcal{H}^{n-1}(\omega).
	\end{aligned}
	$$
	Since
	\begin{equation}
		\lim_{\delta \to 0^+} \frac{\left | u(x+\delta h \omega) - u(x) \right |}{\delta} = \left | {\nabla u(x) \cdot \omega} \right | h
		\label{prop1eq81}
	\end{equation}
	for a.e. $(x, h, \omega) \in \R^n \times \R^+ \times \Sf$, remembering that $\varphi(x,\cdot)$ is continuous at $0$ almost everywhere on $(0,+\infty)$, it follows
	$$
	\lim_{\delta\to0^+} \frac{1}{h^{p(x)+1}}\varphi\left(x,\frac{|u(x+\delta h\omega)-u(x)|}{\delta}\right) = \frac{1}{h^{p(x)+1}}\varphi\left(x,\left|{\nabla u(x)\cdot\omega}\right|h\right)
	$$
	for a.e. $(x,h,\omega)\in\R^n\times \R^+ \times\Sf$. 
	By integrating over the sphere $\Sf$ and respect to $h\in[0,+\infty)$, replacing $t=\left|{\nabla u(x)\cdot\omega}\right|h$, for a.e. $x\in\R^n$ we get
	\begin{equation}
		\label{prop1eq101}
		\begin{aligned}
			\int_\Sf \int_0^{+\infty} \frac{1}{h^{p(x)+1}}&\varphi\left(x,\left|{\nabla u(x)\cdot\omega}\right|h\right)\,dh\,d\mathcal{H}^{n-1}(\omega) \\
			& = |\nabla u(x)|^{p(x)} \gamma_{n,p(x)}\int_0^{+\infty} \varphi(x,t) t^{-(p(x)+1)}\,dt = |\nabla u(x)|^{p(x)}
		\end{aligned}
	\end{equation}
	where we have used again that, for $V \in \R^n$ and any $\mathbf{e}\in\Sf$, we have
	$$
	\int_{\Sf} \left|{V\cdot\omega}\right|^{p(x)} \, d\mathcal{H}^{n-1}(\omega) = |V|^{p(x)} \int_\Sf \left|{\omega\cdot \mathbf{e}}\right|^{p(x)} \, d\mathcal{H}^{n-1}(\omega) ,\quad x \in \R^n,
	$$
	and the normalization condition on $\varphi$.
	As a consequence, we have
	$$
	\int_{|x| \le M}dx \int_\Sf \int_0^{+\infty} \frac{1}{h^{p(x)+1}}\varphi\left(x,\left|{\nabla u(x)\cdot\omega}\right|h\right)\,dh\,d\mathcal{H}^{n-1}(\omega) = \int_{|x| \le M} |\nabla u(x)|^{p(x)} \,dx.
	$$
	Setting $\tilde{\varphi}: \R^n \times \left [0, +\infty \right ) \rightarrow \mathbb{R}$ as
	$$
	\tilde{\varphi}(x,t) = 
	\begin{cases}
		at^{p(x)+1}, & \ \mathrm{for} \ 0 \le t \le 1 \\ 
		b, & \ \mathrm{for} \ t>1 
	\end{cases},
	$$
	then $\tilde{\varphi}(x,\cdot)$ is a non-decreasing function for a.e. $x \in \R^n$,
	$$
	\varphi(x,t) \le \tilde{\varphi}(x,t) \qquad t \geq 0, \,\,\text{for a.e.}\,\, x\in\R^n
	$$
	and
	\begin{equation}
		\int_{0}^{+\infty} \tilde{\varphi}(t)t^{-(p(x)+1)}dt < +\infty .
		\label{prop1eq131}
	\end{equation}
	Since $ u \in C_{c}^{1}(\R^n)$, we have
$$
\left | u(x+\delta h \omega)-u(x) \right |/\delta \le Ch \quad \forall (x,h,\omega) \in \R^n \times \left [ 0, +\infty \right ) \times \Sf
$$
	for some constant $C\ge0$. Then 
	$$
	\varphi\left(x,|u(x+\delta h\omega)-u(x)|/\delta\right)/h^{p(x)+1}
	$$
	is dominated by $\tilde{\varphi}(x,Ch)/h^{p(x)+1}$, which is summable as
$$
		\int_{|x| \le M} dx \int_{0}^{+\infty} dh \int_{\Sf}  \frac{1}{h^{p(x)+1}} \tilde{\varphi} \left ( x, Ch \right ) d\mathcal{H}^{n-1}(\omega) < +\infty.
$$
	We are now able to apply the dominated convergence theorem, getting
$$
		\begin{aligned}
			\lim_{\delta \to 0^+}\Lambda_{\delta} (u) & = \lim_{\delta \to 0^+}\int_{|x| \le M}dx \int_{0}^{+\infty} dh \int_{\Sf} \frac {\varphi \left (x, \left | u(x+\delta h\omega)-u(x) \right | / \delta \right )}{h^{p(x)+1}} \, d\mathcal{H}^{n-1}(\omega) \\
			& = \int_{|x| \le M}dx \int_{0}^{+\infty} dh \int_{\Sf} \frac {\varphi \left (x, \left| {\nabla u(x)\cdot\omega} \right| h \right )}{h^{p(x)+1}} \, d\mathcal{H}^{n-1}(\omega) \\
			& = \int_{|x| \le M} \left | \nabla u(x) \right |^{p(x)} \,dx = \int_{\R^n} \left | \nabla u(x) \right |^{p(x)} dx. 
		\end{aligned}
$$
The proof of (\ref{prop11eq2}) is very similar, as a consequence of equations (\ref{prop1eq81}), (\ref{prop1eq101}) and Fatou's lemma. 	

	Let's now suppose that $\Omega \subset \R^n$ is a smooth and bounded domain and let $u\in C^1\left(\bar{\Omega}\right)$. If we take $D \Subset \Omega$ and fix $t>0$ small enough such that
	$$
		B(x,t)=\left \{ y \in \R^n : \left | y-x \right | < t \right \} \Subset \Omega, \quad \forall x \in D,
	$$
	than we have
	$
		\Lambda_{\delta}(u) = A_{\delta}+B_{\delta}+C_{\delta},
	$
	where
	$$
	\begin{aligned}
	& A_{\delta} := \int_{D}  \int_{B(x,t)} \frac{ \varphi_{\delta} \left (x, \left | u(x) - u(y) \right | \right )}{\left | x-y \right | ^{n+p(x)}}\, dy \, dx, \\
	& B_{\delta} :=  \int_{D}  \int_{\Omega \setminus B(x,t)} \frac{ \varphi_{\delta} \left (x, \left | u(x) - u(y) \right | \right )}{\left | x-y \right | ^{n+p(x)}} \, dy \, dx, \\
	& C_{\delta} := \int_{\Omega \setminus D} \int_{\Omega} \frac{ \varphi_{\delta} \left ( x,\left | u(x) - u(y) \right | \right )}{\left | x-y \right | ^{n+p(x)}} \, dy \, dx.
	\end{aligned}
	$$
	By the change of variables $x-y=\delta h \omega$ and dominated convergence theorem, we get
	\begin{equation}
	\label{stima_A}
	\begin{aligned}
		\lim_{\delta\to0^+} A_\delta & = \lim_{\delta\to0^+} \int_{D} \int_{B(x,t)} \frac{ \varphi_{\delta} \left ( x, \left | u(x) - u(y) \right | \right )}{\left | x-y \right | ^{n+p(x)}}\, dy \, dx \\
		& = \int_{D} \lim_{\delta\to0^+} \int_{0}^{t/ \delta} \int_{\Sf} \frac{ \varphi \left ( x, \left | u(x+\delta h \omega) - u(x) \right | / \delta \right )}{h^{p(x)+1}} \, d\mathcal{H}^{n-1}(\omega) \, dh \, dx \\
		& =  \int_{D} \int_{0}^{+\infty} \int_{\Sf} \frac{ \varphi \left ( x, \left |\nabla u(x)  \cdot \omega \right | h\right )}{h^{p(x)+1}}\ d\mathcal{H}^{n-1}(\omega) \, dh \, dx= \int_{D} | \nabla u(x)|^{p(x)} \, dx.
	\end{aligned}
	\end{equation}
	Now we claim that
	\begin{equation}
		\label{stime_delta}
		\lim_{\delta\to0^+} B_\delta = 0 \qquad \text{and} \qquad C_\delta \le C \mathcal{L}^n\left(\Omega\setminus D\right).
	\end{equation}
In fact, for all $\delta \in (0,1)$, we have
	$$
	\begin{aligned}
		 B_{\delta}&=  \int_{D} \delta^{p(x)}\,dx \int_{\Omega \setminus B(x,t)} \frac{ \varphi \left (x, \left | u(x) - u(y) \right | \slash \delta \right )}{\left | x-y \right | ^{n+p(x)}} \, dy \\
		& \le \int_{D} \delta^{p(x)}\,dx \int_{\Omega \setminus B(x,t)} \frac{b}{t^{n+p(x)}} \, dy \
		 \le \frac{\delta}{\min\left\{t^{n+1},t^{n+p^+}\right\}} b \left(\mathcal{L}^n\left(\Omega\right)\right)^2.
	\end{aligned}
	$$
	Secondly, since
	$$
	\begin{aligned}
		C_{\delta} & = \int_{\Omega \setminus D} \delta^{p(x)}\,dx \int_{\Omega} \frac{ \varphi \left ( x,\left | u(x) - u(y) \right | \slash \delta \right )}{\left | x-y \right | ^{n+p(x)}} \, dy \, dx \\
		& \le \int_{\Omega \setminus D} \delta^{p(x)}\,dx \int_{\Omega} \frac{ \tilde{\varphi} \left ( x,\left | u(x) - u(y) \right | \slash \delta \right )}{\left | x-y \right | ^{n+p(x)}} \, dy \, dx \\
		& \le \int_{\Omega \setminus D} \delta^{p(x)}\,dx \int_{\Omega} \frac{\tilde{\varphi} \left (x, L \left | x-y \right | / \delta \right )}{\left | x-y \right |^{n+p(x)}} \, dy, \qquad \forall \delta \in (0,1),
	\end{aligned}
	$$
	where $L$ is the Lipschitz constant of $u$ over $\Omega$, 
	by making the change of variables $z=L\left(y-x\right) \slash \delta$ and using the definition of $\tilde{\varphi}$, we have
	$$
	\begin{aligned}
		C_{\delta} \le \int_{\Omega \setminus D} \delta^{p(x)} \, dx & \int_{\Omega} \left(\frac{L}{\delta}\right)^{p(x)} \frac{\tilde{\varphi} \left (x, \left| z \right | \right )}{\left | z \right |^{n+p(x)}} \, dz \\
		& \le \int_{\Omega \setminus D} \delta^{p(x)}\,dx \int_{\Omega\cap\left\{|z| \le 1\right\}} \left(\frac{L}{\delta}\right)^{p(x)}  \frac{a}{\left | z \right |^{n-1}} \, dz \\
		& + \int_{\Omega \setminus D} \delta^{p(x)}\,dx \int_{\Omega\cap\left\{|z| > 1\right\}} \left(\frac{L}{\delta}\right)^{p(x)} \frac{b}{\left | z \right |^{n+p(x)}} \, dz \\
		& \le \int_{\Omega \setminus D} L^{p(x)} \,dx \left(a \int_{\left\{|z| \le 1\right\}}  \frac{1}{\left | z \right |^{n-1}} \, dz + b \int_{\left\{|z| > 1\right\}} \frac{1}{\left | z \right |^{n+1}} \, dz \right) \\
		& \le C \mathcal{L}^n\left(\Omega\setminus D\right),
	\end{aligned}
	$$
	where $C$ depends on $L$, $a$, $b$ and $n$. Being
	$$
		\left | \Lambda_{\delta}(u) - \int_{\Omega} \left | \nabla u(x) \right |^{p(x)} \,dx \right | \le  \left | 	A_{\delta}(u) - \int_{D} \left | \nabla u(x) \right |^{p(x)}\,dx \right | + B_{\delta}+ C_{\delta} +  \int_{\Omega \setminus D} \left | \nabla u(x) \right |^{p(x)}\,dx ,
	$$
	by using \eqref{stima_A} and \eqref{stime_delta}, we get
	$$
		\limsup_{\delta \to 0^+} \left | \Lambda_{\delta}(u) - \int_{\Omega} \left | \nabla u(x) \right |^{p(x)}\,dx \right | \le C \mathcal{L}^n\left(\Omega\setminus D\right) + \int_{\Omega \setminus D} \left | \nabla u(x) \right |^{p(x)}\,dx.
	$$
	By the arbitrariness of $D\Subset\Omega$ the assertion follows.
\end{proof}

\bigskip

\end{document}